\documentclass[a4paper,12pt]{article}
\usepackage{graphicx}
\usepackage{amsfonts}
\usepackage{amsthm}
\usepackage{pgfplots}
\usepackage{amsmath}
\usepackage{float}
\usepackage{enumerate}
\usepackage{tikz}
\usetikzlibrary{positioning,decorations.pathmorphing,arrows.meta}

\title{\textbf{Monochromatic Paths in the Complete Symmetric Infinite Digraph}}
\author{Hannah Guggiari\thanks{Mathematical Institute, University of Oxford, Oxford, OX2 6GG, United Kingdom.}}
\date{\today}

\tikzset{main node/.style={circle,fill=black,draw,minimum width=4pt,inner sep=0pt}}

\newtheorem{theorem}{Theorem}[section]
\newtheorem{conjecture}[theorem]{Conjecture}

\begin{document}
\newpage
\maketitle
\begin{abstract}
\noindent
Let $\vec{K}_{\mathbb{N}}$ be the complete symmetric digraph on the positive integers. Answering a question of DeBiasio and McKenney \cite{DM16}, we construct a 2-colouring of the edges of $\vec{K}_{\mathbb{N}}$ in which every monochromatic path has density 0. On the other hand, we show that, in every colouring that does not have a directed path with $r$ edges in the first colour, there is directed path in the second colour with density at least $\frac1r$.
\end{abstract}
\noindent
\section{Introduction}
Let $K_{\mathbb{N}}$ be the complete graph on the positive integers and $\vec{K}_{\mathbb{N}}$ be the complete symmetric digraph on the positive integers. The \textit{upper density} of a set $A\subseteq\mathbb{N}$ is
\[
\bar{d}(A)=\limsup_{n\rightarrow\infty}\frac{|A\cap\{1,\dots,n\}|}n
\]
For a graph or digraph $G$ with vertex set $V(G)\subseteq\mathbb{N}$, we define the \textit{upper density} of $G$ to be that of $V(G)$.  Throughout this paper, by a $k$-colouring, we mean a $k$-edge-colouring. In a 2-colouring, we will assume that the colours are red and blue.

For undirected graphs, Rado \cite{R78} showed that, in every 2-colouring of $K_{\mathbb{N}}$, there is a partition of the vertices into 2 disjoint paths of distinct colours. Elekes, Soukup, Soukup and Szentmikl\`{o}ssy \cite{ESSS17} have recently extended this result to the complete graph on $\omega_1$ where $\omega_1$ is the smallest uncountable cardinal. Erd\H{o}s and Galvin \cite{EG93} showed that if the edges of $K_{\mathbb{N}}$ are coloured with 2 colours, then there exists a monochromatic infinite path with upper density at least $\frac23$.

For directed graphs, the picture is a little different. In the finite case, Raynaud \cite{R73} showed that, in any 2-colouring of $\vec{K}_n$, there is a directed Hamiltonian cycle $C$ with the following property: there are two vertices $a$ and $b$ such that the directed path from $a$ to $b$ along $C$ is red and the directed path from $b$ to $a$ along $C$ is blue.

In this paper, we will be interested in the infinite directed case. In particular, we will be considering edge-colourings of $\vec{K}_{\mathbb{N}}$ and will prove a variety of results relating to the upper density of paths in $\vec{K}_{\mathbb{N}}$.

Let $P=v_1v_2...$ be a path in $\vec{K}_{\mathbb{N}}$. We say that $P$ is a \textit{directed path} if every edge in $P$ is oriented in the same direction. By the \textit{length} of a path, we mean the number of edges in the path. DeBiasio and McKenney \cite{DM16} recently proved the following result.

\begin{theorem}
\label{thm:densityepsilon}
For every $\varepsilon>0$, there exists a 2-colouring of $\vec{K}_{\mathbb{N}}$ such that every monochromatic directed path has upper density less than $\varepsilon$.
\end{theorem}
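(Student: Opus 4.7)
My plan is to exhibit an explicit residue-class colouring. Fix an integer $r$ with $1/r<\varepsilon$, and partition $\mathbb{N}$ into the $r$ residue classes
\[
A_i=\{n\in\mathbb{N}:n\equiv i\pmod r\},\qquad 1\le i\le r,
\]
each of which has upper density exactly $1/r$. I would then define the colouring by the following rule: every edge with both endpoints in the same class is \emph{blue}, and for $i<j$ the edge $a\to b$ with $a\in A_i$, $b\in A_j$ is \emph{red} while the reverse edge $b\to a$ is \emph{blue}. In other words, red is reserved exactly for the ``upward'' cross-class edges and every other edge is blue.

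Next I would analyse the two colours separately. Along any red directed path $v_1\to v_2\to\cdots$, consecutive vertices cannot lie in the same class (because within-class edges are all blue), and every cross-class red edge goes from a strictly smaller class to a strictly larger one. Hence the sequence of class indices along the path is strictly increasing, so the path has at most $r$ vertices and its upper density is $0$. Along any blue directed path, by contrast, each edge either stays inside the current class or moves to a strictly smaller class, so the sequence of class indices is non-increasing. Taking values in the finite set $\{1,\dots,r\}$, this sequence eventually stabilises at some index $j$, which forces all but finitely many vertices of the path to lie in $A_j$. By subadditivity of $\bar d$, any blue directed path therefore has upper density at most $\bar d(A_j)=1/r<\varepsilon$, as required.

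There is no real technical obstacle here: the construction is essentially forced once one realises that the cross-class rule should be deliberately asymmetric, so that one colour becomes a DAG on the residue classes (killing infinite monochromatic paths altogether) while the other has its long paths confined to a single sparse residue class. The initial choice $r>1/\varepsilon$ is exactly what converts the natural bound $1/r$ into the required strict inequality $<\varepsilon$.
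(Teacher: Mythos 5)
Your construction and argument are correct. One thing worth flagging: the paper does not actually prove Theorem~\ref{thm:densityepsilon} itself, but cites it from DeBiasio and McKenney; however, your colouring is precisely the paper's colouring $c_r$ (up to swapping which cross-class direction is red and which is blue), introduced after Theorem~\ref{thm:restrictedred} as the extremal example, and your analysis---red paths have strictly increasing class index and hence at most $r$ vertices, blue paths are eventually confined to a single residue class of density $1/r$---matches the paper's brief justification there. The stronger result that the paper does prove, Theorem~\ref{thm:density0}, requires the subtler nested dyadic construction of Section~\ref{density0} rather than a single residue partition.
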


\noindent
DeBiasio and McKenney also asked the following natural question: does there exist a 2-colouring of $\vec{K}_{\mathbb{N}}$ in which every monochromatic directed path has upper density 0? In Section \ref{density0}, we will give a positive answer to this question.

\begin{theorem}
\label{thm:density0}
There exists a 2-colouring of $\vec{K}_{\mathbb{N}}$ such that every monochromatic directed path has upper density 0.
\end{theorem}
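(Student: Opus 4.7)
My plan is to build the colouring via a block decomposition of $\mathbb{N}$. Let $a_0 = 0 < a_1 < a_2 < \cdots$ be a rapidly growing sequence, to be chosen, and set $L_k = \{a_{k-1}+1, \ldots, a_k\}$ so that $\mathbb{N} = \bigsqcup_{k \geq 1} L_k$. The colouring has two parts. First, for every $i < j$ and every $u \in L_i$, $v \in L_j$, colour the edge $u \to v$ red and the edge $v \to u$ blue, so that every ``forward'' inter-block edge is red and every ``backward'' inter-block edge is blue. Second, within each $L_k$, use a copy of the colouring from Theorem~\ref{thm:densityepsilon} with $\varepsilon = 1/k$, by identifying $L_k$ with an initial segment of $\mathbb{N}$ and transporting the colouring.

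The analysis splits by colour. A blue directed path visits the blocks $L_j$ in weakly decreasing order, because every blue inter-block edge goes from a higher-index block to a lower-index block. Hence after finitely many steps any blue path is contained in a single $L_j$, which is finite, so the whole path is finite and has density~$0$. A red directed path $P$, by the same reasoning, visits blocks in weakly increasing order, so $P \cap L_k$ is a contiguous red directed subpath using only the inner colouring. Any such subpath can be extended (within the inner colouring on all of $\mathbb{N}$) to an infinite red directed path, which by Theorem~\ref{thm:densityepsilon} has upper density strictly less than~$1/k$. Provided $|L_k|$ is sufficiently large, this bounds $|P \cap L_k| \leq 2|L_k|/k$. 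Choosing block sizes so that $|L_k| \geq 2^k |L_{k-1}|$ and also large enough that the inner density bound is effective, the mediant inequality $(x+y)/(c+d) \leq \max(x/c,\, y/d)$ applied to $D_m = |P \cap \{1,\ldots,m\}|$ gives $D_m/m \leq C/k$ for some absolute constant~$C$ and every $m \in (a_{k-1}, a_k]$. As $m \to \infty$ we have $k \to \infty$, so $P$ has upper density~$0$.

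The main technical obstacle is that the bound ``$<1/k$'' in Theorem~\ref{thm:densityepsilon} is a $\limsup$ statement whose convergence rate can depend on the particular monochromatic path. It is therefore not immediately clear that one can choose a single $|L_k|$ in advance that handles every infinite red extension of every possible subpath in $L_k$. I would circumvent this either by a compactness argument producing a uniform finite version of Theorem~\ref{thm:densityepsilon} (in which every such 2-colouring of $\vec{K}_N$ bounds the length of monochromatic directed paths in a way depending only on $N$ and $\varepsilon$), or by choosing the block sizes iteratively, using knowledge of the specific inner colouring at each stage. Once this hurdle is cleared, the argument above delivers the required density-$0$ colouring.
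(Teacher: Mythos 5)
Your block construction---forward inter-block edges red, backward inter-block edges blue, some inner colouring within each block---is a genuinely different route from the paper, which uses a single ``scale-free'' colouring: for distinct $m,n$, look at the first binary digit where they differ and orient the red edge towards the vertex whose digit is $1$; an induction then shows any red path is eventually confined to a single residue class modulo $2^k$. Your global structure is sound (blue paths are eventually trapped in one finite block; red paths visit blocks in weakly increasing order), and you correctly locate the weak point of your own argument.

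That weak point is, however, a real gap, and neither of your proposed fixes closes it as stated. Theorem~\ref{thm:densityepsilon} only bounds $\limsup_n |P\cap\{1,\dots,n\}|/n$ for \emph{infinite} monochromatic paths $P$; it gives no bound at all of the form ``every red directed path inside $\{1,\dots,N\}$ has at most $\varepsilon N$ vertices,'' and the step ``extend the finite red subpath in $L_k$ to an infinite red path and apply the density bound'' does not control the finite subpath's length. In fact the desired uniform finite bound can fail for a colouring satisfying the conclusion of Theorem~\ref{thm:densityepsilon}: place a red Hamiltonian directed path inside each block of a rapidly growing block decomposition and colour every other edge blue; then every infinite red path is finite (hence density $0$), yet $\{1,\dots,N\}$ contains red paths of length $\Omega(N)$ for infinitely many $N$. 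So a compactness argument from the limsup statement alone cannot succeed, and iterating the block size against a black-box inner colouring meets the same obstruction. The easy repair is to notice that your inter-block structure already kills blue paths, so the inner colouring only needs to make \emph{red} paths short; colouring every intra-block edge blue (so red paths use at most one vertex per block), or using the paper's $c_k$ inside $L_k$ (red paths of length at most $k$), finishes the argument cleanly and without any appeal to Theorem~\ref{thm:densityepsilon}.
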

\noindent
In light of this result, it is natural to ask under what conditions we can guarantee the existence of a monochromatic path of positive density. It is easy to see (from Ramsey's Theorem) that every 2-colouring of $\vec{K}_{\mathbb{N}}$ contains a monochromatic directed path of infinite length. In Section \ref{restrictred}, we will consider what happens if we restrict the length of directed paths in one of the colours. In particular, we will prove the following density result.

\begin{theorem}
\label{thm:restrictedred}
Take any 2-colouring of $\vec{K}_{\mathbb{N}}$ that does not contain a red directed path of length $r$. Then there exists a blue directed path with upper density at least $\frac1r$.
\end{theorem}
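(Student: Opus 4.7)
The plan is to adapt the finite Gallai--Hasse--Roy--Vitaver argument to the infinite setting. Fix a 2-colouring of $\vec{K}_{\mathbb{N}}$ with no red directed path of length $r$. For each vertex $v$, let $f(v)$ denote the length of the longest red directed path ending at $v$ (so $f(v)=0$ if no red edge points into $v$). By hypothesis, every red directed path has at most $r-1$ edges, so $f$ takes values in $\{0,1,\dots,r-1\}$. This partitions $\mathbb{N}$ into $r$ classes $V_0,\dots,V_{r-1}$ where $V_i=f^{-1}(i)$.

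Next I would observe that no red edge can lie inside a single class. Indeed, if $u,v\in V_i$ and the edge $u\to v$ were red, then appending this edge to a red directed path of length $i$ ending at $u$ would give a red directed path of length $i+1$ ending at $v$, contradicting $f(v)=i$. Since in $\vec{K}_{\mathbb{N}}$ both edges $u\to v$ and $v\to u$ exist for every pair of distinct vertices, this shows that all edges within each $V_i$ (in either direction) are blue.

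Because $V_0\cup\cdots\cup V_{r-1}=\mathbb{N}$, for every $n$ at least one of the sets satisfies $|V_i\cap\{1,\dots,n\}|\geq n/r$. By pigeonhole over the finitely many choices of $i$, there is some index $i^*$ for which this inequality holds for infinitely many $n$, so $\bar d(V_{i^*})\geq \frac1r$. Enumerating $V_{i^*}$ in increasing order as $v_1<v_2<\cdots$, the path $v_1\to v_2\to v_3\to\cdots$ is a blue directed path (by the previous paragraph) whose vertex set is exactly $V_{i^*}$, so its upper density is at least $\frac1r$, as required.

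There is no substantive obstacle: the only place that required a moment's thought is confirming that the natural level function $f$ is well-defined and finite (which is immediate from the assumption that red paths have bounded length), and that upper density distributes across a finite partition well enough to guarantee one class of density $\geq 1/r$. The construction of the blue path from $V_{i^*}$ is free because a blue symmetric complete digraph admits a directed Hamiltonian path through its vertices in any prescribed order.
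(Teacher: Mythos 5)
Your partition by the longest red path and the density pigeonhole step are essentially the paper's own (the paper levels by the longest red path \emph{starting} at a vertex rather than \emph{ending} at it, but this is an inessential mirror image). However, the central claim in your second paragraph --- that no red edge can lie within a single class $V_i$ --- is false, and the argument you give for it has a gap: taking a longest red path $P$ of length $i$ ending at $u$ and appending the red edge $u\to v$, the result is a \emph{path} only if $v\notin P$, and nothing forces this. A concrete failure: suppose $u\to v$ and $v\to u$ are both red and there are no other red edges. Then $f(u)=f(v)=1$, so $u,v\in V_1$, yet the edge $u\to v$ is red. Repeating over infinitely many disjoint pairs $(u_j,v_j)$ makes $V_1$ infinite and of density $1$, and the increasing enumeration of $V_1$ then need not be a blue directed path.

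The paper sidesteps this by proving only a weaker statement: within the chosen class, each vertex has red indegree at most $i$ (and this is proved via exactly the observation your argument omits, namely that a problematic red neighbour must already lie on the longest red path). That bounded red degree, together with the fact that all but at most $i$ vertices of the class have infinite blue outdegree within it, lets one build a blue directed path that covers all of the class, but not simply by walking the increasing enumeration --- one must occasionally detour through one extra vertex when the next edge would be red. Your approach \emph{could} be salvaged, and would in fact simplify the final construction, if you first replaced the red subgraph by a maximal acyclic red subdigraph before levelling (the genuine Gallai--Hasse--Roy--Vitaver step, justified here by Zorn's lemma): then no red edge lies inside a class and the naive increasing path is blue. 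As written, though, the proof is incomplete.
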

\noindent
This result is the best possible as shown by the following construction. Divide the vertices of $\vec{K}_{\mathbb{N}}$ into sets $A_1,\dots,A_r$ based on their residue modulo $r$. So, for any $n\in\mathbb{N}$, we have $n\in A_i$ if and only if $n\equiv i\mod r$. Consider distinct vertices $m,n\in\mathbb{N}$. If $m,n\in A_i$, then colour both $(m,n)$ and $(n,m)$ blue. If $m\in A_i$, $n\in A_j$ and $i<j$, then colour $(m,n)$ blue and $(n,m)$ red (see Figure \ref{fig:extremalred}). We call this colouring $c_r$. It should be noted that each of the sets $A_i$ has density $\frac1r$.

\begin{figure}[H]
\centering
\begin{tikzpicture}
\draw (-6,0) ellipse (1cm and 2cm) node {$A_0$};
\draw (-2,0) ellipse (1cm and 2cm) node {$A_1$};
\draw (2,0) ellipse (1cm and 2cm) node {$A_2$};
\draw (6,0) ellipse (1cm and 2cm) node {$A_3$};
\draw[-{Latex[length=2mm,width=2mm]},red,thick] (5.5,0.5) to (2.5,0.5);
\draw[-{Latex[length=2mm,width=2mm]},red,thick] (1.5,0.5) to (-1.5,0.5);
\draw[-{Latex[length=2mm,width=2mm]},red,thick] (-2.5,0.5) to (-5.5,0.5);
\draw[-{Latex[length=2mm,width=2mm]},blue,thick] (-5.5,-0.5) to (-2.5,-0.5);
\draw[-{Latex[length=2mm,width=2mm]},blue,thick] (-1.5,-0.5) to (1.5,-0.5);
\draw[-{Latex[length=2mm,width=2mm]},blue,thick] (2.5,-0.5) to (5.5,-0.5);
\draw[-{Latex[length=2mm,width=2mm]},red,thick] (5.5,1) to [bend right] (-1.5,1);
\draw[-{Latex[length=2mm,width=2mm]},red,thick] (1.5,1) to [bend right] (-5.5,1);
\draw[-{Latex[length=2mm,width=2mm]},red,thick] (6,1) to [bend right] (-6,1);
\draw[-{Latex[length=2mm,width=2mm]},blue,thick] (-5.5,-1) to [bend right] (1.5,-1);
\draw[-{Latex[length=2mm,width=2mm]},blue,thick] (-1.5,-1) to [bend right] (5.5,-1);
\draw[-{Latex[length=2mm,width=2mm]},blue,thick] (-6,-1) to [bend right] (6,-1);
\draw[-{Latex[length=2mm,width=2mm]},blue,thick] (-6.3,0.5) to (-6.3,-0.5);
\draw[-{Latex[length=2mm,width=2mm]},blue,thick] (-6.5,-0.5) to (-6.5,0.5);
\draw[-{Latex[length=2mm,width=2mm]},blue,thick] (-1.7,0.5) to (-1.7,-0.5);
\draw[-{Latex[length=2mm,width=2mm]},blue,thick] (-2.3,-0.5) to (-2.3,0.5);
\draw[-{Latex[length=2mm,width=2mm]},blue,thick] (1.7,0.5) to (1.7,-0.5);
\draw[-{Latex[length=2mm,width=2mm]},blue,thick] (2.3,-0.5) to (2.3,0.5);
\draw[-{Latex[length=2mm,width=2mm]},blue,thick] (6.3,0.5) to (6.3,-0.5);
\draw[-{Latex[length=2mm,width=2mm]},blue,thick] (6.5,-0.5) to (6.5,0.5);
\end{tikzpicture}
\caption{Extremal colouring $c_4$.}
\label{fig:extremalred}
\end{figure}
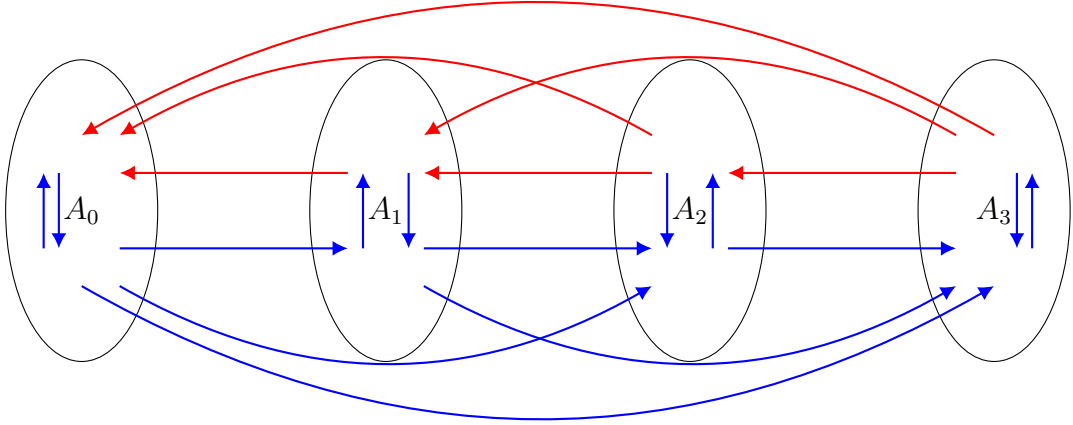

\noindent
It is straightforward to see that the longest red directed path has length $r-1$. Any blue directed path has infinite intersection with at most one of the sets $A_i$ and therefore has upper density at most $\frac1r$.

In Section \ref{stability}, we will consider the stability of this result. We will prove the following theorem.

\begin{theorem}
\label{thm:stability}
Take any 2-colouring of $\vec{K}_{\mathbb{N}}$ in which there are no red directed paths of length $r$ and every blue directed path has upper density at most $\frac1r$. Then, there exists a finite set of vertices $U$ such that the 2-colouring induced on $\mathbb{N}\backslash U$ is isomorphic to the 2-colouring $c_r$ described above.
\end{theorem}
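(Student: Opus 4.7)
My plan is to begin by applying the Gallai--Roy--Vitaver theorem to the red sub-digraph. Since no red directed path has $r+1$ vertices, the chromatic number of the underlying red graph is at most $r$, giving a partition $\mathbb{N}=C_1\cup\cdots\cup C_r$ such that no edge of the red digraph lies inside any class. Every intra-class edge is therefore blue in both directions, so each $C_i$ behaves as a ``symmetric blue clique''. Any enumeration of $C_i$ is then a blue directed path, whence $\bar d(C_i)\leq\frac1r$ by hypothesis; combined with the fact that the classes partition $\mathbb{N}$, this forces $d(C_i)=\frac1r$ for every $i$.

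The heart of the argument is to show that the $r$ classes admit a total order $\sigma$ such that, after removing a finite set $U$, every edge from $C_{\sigma(i)}$ to $C_{\sigma(j)}$ with $i<j$ is blue and every edge in the reverse direction is red. I would prove two dichotomies for each pair $\{i,j\}$. First, at most one of the sets $\{u\in C_i:u\text{ has a blue out-edge to }C_j\}$ and $\{v\in C_j:v\text{ has a blue out-edge to }C_i\}$ can be infinite; otherwise, interleaving dense blue excursions inside each class with jumps along inter-class blue edges builds a blue directed path of upper density approaching $\frac2r$, contradicting the density hypothesis. Second, at least one of them must be infinite; otherwise, after removing those finitely many vertices, every inter-class edge is red, and then infinitely many red edges in both directions, from infinitely many distinct sources, allow one to extract by a Ramsey/pigeonhole selection a red directed path of length $r$ alternating between $C_i$ and $C_j$, a contradiction. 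Together these give, for each pair, a unique direction in which cofinitely many edges are blue and the reverse direction has cofinitely many red edges.

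I would next assemble these pairwise directions into a tournament $T$ on $\{1,\dots,r\}$ and show that $T$ has no directed cycle: a directed cycle of length $s\leq r$ would, by iterating red inter-class edges around the cycle and using the infinitude of red sources at each step, yield arbitrarily long red directed paths, violating the length-$r$ constraint. A finite tournament without directed cycles is a total order, giving the desired $\sigma$. The finitely many ``bad'' source vertices for each pair, together with the finitely many vertices involved in the exceptional red-forward or blue-backward edges, collect into a finite set $U$. On $\mathbb{N}\setminus U$, every intra-class edge is blue and every inter-class edge follows the extremal pattern, so any bijection matching classes in the order $\sigma$ gives a colour-preserving isomorphism onto $c_r$.

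The main obstacle will be the Ramsey-style selection in these dichotomies: ``infinitely many red (or blue) edges'' between two classes does not in itself guarantee infinitely many source vertices supporting them, since a single vertex could carry unboundedly many out-edges of one colour. To chain edges into a genuinely long alternating monochromatic path, one must separate ``few sources of high out-degree'' from ``many sources'', and invoke the density bound in its sharp form---using that each $C_i$ has density exactly $\frac1r$, so that a blue path pivoting through a single vertex of one class can collect density from that class only through the intra-class blue clique and cannot return a second time. It is this tight interplay between the per-class density $\frac1r$ and the bipartite inter-class colour pattern that ultimately forces the extremal structure up to a finite exceptional set.
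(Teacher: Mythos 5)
Your proposal is a genuinely different route from the paper. The paper partitions $\mathbb{N}$ canonically into $A_0,\dots,A_{r-1}$, where $v\in A_i$ exactly when the longest red directed path starting at $v$ has length $i$. That choice is crucial: it \emph{automatically} forces every edge from $A_i$ to $A_j$ with $i<j$ to be blue (a red edge would give a longer red path from $u\in A_i$), so the only thing the paper has to prove is that the backward edges $A_j\to A_i$ are cofinitely red, which it does with an infinite-matching argument and the density bound. Your Gallai--Roy--Vitaver colouring is a perfectly valid proper $r$-colouring (it needs De~Bruijn--Erd\H{o}s, or one can simply note that the paper's canonical partition is itself such a colouring), and it gives ``intra-class edges blue'' for free; but it gives you \emph{no} a priori order on the classes and no control over inter-class edges, so you must rebuild both, and that is where the gaps sit.

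First gap: your Claim 1 asserts that if both sets $\{u\in C_i:\exists\ \text{blue out-edge to }C_j\}$ and $\{v\in C_j:\exists\ \text{blue out-edge to }C_i\}$ are infinite, then interleaving excursions yields a blue path of density approaching $\tfrac2r$. But infinitude of sources does not let you build the path: all the blue out-edges from $C_i$ could land on the same finite set of targets in $C_j$, and a path may visit each vertex only once, so the jumps would dry up. You gesture at this in your ``main obstacle'' paragraph, but the resolution is to phrase the dichotomy in terms of an infinite \emph{matching} of blue edges (as the paper does), not merely many sources; from an infinite blue matching in each direction the density-$\tfrac2r$ path does follow.

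Second and more serious gap: after your dichotomies, the only information you have for a pair $\{i,j\}$ with direction $C_i\to C_j$ is that cofinitely many $v\in C_j$ have \emph{only} red out-edges to $C_i$ (the source-set in $C_j$ is finite). Nothing yet says the forward edges $C_i\to C_j$ are cofinitely blue. Infinitely many vertices of $C_i$ having \emph{some} blue out-edge to $C_j$ is compatible with infinitely many red edges from $C_i$ to $C_j$. Your concluding sentence (``every inter-class edge follows the extremal pattern'') is therefore unjustified as written. The fix is another matching/vertex-cover step: if the red edges from $C_i$ to $C_j$ admit no infinite matching, a maximal finite matching's endpoints form a finite vertex cover, and deleting it removes all such red edges; if they do admit an infinite matching, alternating those edges with the (now all-red) backward edges $C_j\to C_i$ produces arbitrarily long red directed paths, contradicting the hypothesis. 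Without this explicit step, the claimed isomorphism to $c_r$ does not follow. Once both gaps are repaired in this way, the tournament-acyclicity part of your argument is sound, and the overall strategy does work; but as currently written the two dichotomies do not by themselves deliver the cofinite inter-class colour pattern.
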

\noindent
Finally, in Section \ref{restrictedmanycolours}, we will extend the result of Theorem \ref{thm:restrictedred} to include $(k+1)$-colourings of $\vec{K}_{\mathbb{N}}$ for any $k\geq2$.

\begin{theorem}
\label{thm:restrictedmanycolours}
Fix $k\geq2$ and consider any $(k+1)$-colouring of $\vec{K}_{\mathbb{N}}$ using colours $c_1,\dots,c_{k+1}$. Suppose that, for every $t\in[k]$, there is no $c_t$-coloured directed path of length $r_t$. Then there exists a directed path of colour $c_{k+1}$ with upper density at least $\prod_{t=1}^k\frac1{r_t}$.
\end{theorem}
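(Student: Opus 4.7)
The plan is a direct argument via iterated Gallai--Roy partitioning, avoiding any induction on $k$. I first recall the Gallai--Roy-style fact that if $D$ is a digraph on an arbitrary vertex set containing no directed path of length $r$, then assigning to each vertex $v$ the length $\ell(v)$ of the longest directed $D$-path ending at $v$ gives a well-defined function with values in $\{0,1,\dots,r-1\}$; since any edge $(u,v)\in D$ forces $\ell(v)\geq\ell(u)+1$, the level sets $\ell^{-1}(i)$ partition the vertex set into at most $r$ classes, each independent in $D$.

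I would apply this iteratively to build a partition of $\mathbb{N}$. Start with the trivial partition $\{\mathbb{N}\}$, and for $t=1,2,\dots,k$ in turn, refine each current cell $C$ by applying Gallai--Roy to the digraph of $c_t$-edges restricted to $C$. The hypothesis that no $c_t$-coloured directed path has length $r_t$ passes to every subdigraph, so each cell is split into at most $r_t$ sub-cells containing no $c_t$-edge; and sub-cells of a $c_s$-independent cell plainly remain $c_s$-independent for $s<t$. After all $k$ steps the partition has at most $\prod_{t=1}^k r_t$ cells, in each of which no $c_t$-edge exists for any $t\in[k]$, so every edge of $\vec K_{\mathbb{N}}$ with both endpoints in a common final cell is forced to be coloured $c_{k+1}$.

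The subadditivity of upper density gives $1=\bar d(\mathbb{N})\leq\sum_{C}\bar d(C)$, summed over the final cells, so some cell $C^*$ satisfies $\bar d(C^*)\geq\prod_{t=1}^k\frac{1}{r_t}$. Listing the vertices of $C^*$ in increasing order produces a directed Hamiltonian path of $\vec K_{C^*}$ all of whose edges are coloured $c_{k+1}$ and whose vertex set has upper density $\bar d(C^*)$, giving the required bound.

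The main point that needs care is the infinite Gallai--Roy construction, which is well-defined precisely because the bounded-path-length hypothesis bounds $\ell$; the density pigeonhole step is essentially immediate from subadditivity of $\bar d$. As a sanity check, applying this procedure to the natural multi-colour generalisation of the extremal colouring $c_r$ recovers the product partition by residues modulo $r_1,\dots,r_k$, and the associated $c_{k+1}$-Hamiltonian path attains the bound with equality, confirming that Theorem~\ref{thm:restrictedmanycolours} is sharp.
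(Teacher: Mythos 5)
Your overall strategy is sound and, once repaired, gives a genuinely slicker proof than the paper's, but as written it contains a real gap in the ``Gallai--Roy-style fact'' you invoke. The claim that, for $\ell(v)$ defined as the length of the longest directed $D$-path \emph{ending} at $v$, every edge $(u,v)\in D$ forces $\ell(v)\geq\ell(u)+1$ is false in a general digraph: the extremal path ending at $u$ may already pass through $v$, in which case it cannot be extended along $(u,v)$. A concrete counterexample lives in exactly the setting of this paper: if both $(u,v)$ and $(v,u)$ are coloured $c_t$ and $r_t\ge 2$, this is allowed (neither gives a $c_t$-path of length $2$), yet $\ell(u)=\ell(v)=1$, so $u$ and $v$ land in the same level set while being joined by $c_t$-edges in both directions. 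Consequently your level sets need not be $c_t$-independent, and the final step — that every edge inside a cell is forced to be $c_{k+1}$, so any enumeration of the cell is a $c_{k+1}$-path — does not follow. The paper's own Claim~1 is careful on exactly this point: it proves only that the $c_t$-indegree inside a level set is bounded (by $i\le r_t$), not that it is zero, and for that reason the paper cannot take a Hamiltonian path of the cell and instead must run a greedy construction that occasionally detours through one intermediate vertex.

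The fix is available and keeps your argument's shape: Gallai--Roy does give a proper $r_t$-colouring of the $c_t$-subdigraph, but via the standard device of first passing to a maximal acyclic subdigraph $D_t'\subseteq D_t$ and computing longest $D_t'$-paths; then $\ell_t$ is a proper colouring of all of $D_t$ (for edges outside $D_t'$ there is a $D_t'$-path back, which strictly increases $\ell_t$), and still takes values in $\{0,\dots,r_t-1\}$ since $D_t'\subseteq D_t$ has no path of length $r_t$. With that replacement, intersecting the $k$ colourings (or refining iteratively, as you do) yields $\prod_t r_t$ cells that really are $c_t$-free for all $t\in[k]$; subadditivity of $\bar d$ then gives a cell of upper density at least $\prod_t 1/r_t$, all of whose internal edges are $c_{k+1}$, and the increasing enumeration of that cell finishes the proof. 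This is a cleaner route than the paper's — it avoids the bounded-indegree/finite-bad-outdegree bookkeeping and the greedy path-building — but you must state and use the correct form of Gallai--Roy, not the naive longest-path level function.
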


\section{All Monochromatic Paths Have Density 0}
\label{density0}
\begin{proof}[Proof of Theorem \ref{thm:density0}]
We colour the edges of $\vec{K}_{\mathbb{N}}$ in the following way. Let $m,n\in\mathbb{N}$ be distinct positive integers. Set $t=\min\{s\in\mathbb{N}:m\not\equiv n\mod2^s\}$. Exchanging $m$ and $n$ if necessary, we may assume that $m\equiv x\mod2^t$ where $x\in\{0,\dots,2^{t-1}-1\}$ and $n\equiv2^{t-1}+x\mod2^t$. We colour $(m,n)$ red and $(n,m)$ blue.

Let $P$ be any monochromatic directed path. If $P$ is a finite path, then $\bar{d}(P)=0$. Therefore, we may assume that $P$ is an infinite path. Without loss of generality, $P$ is red.
\\
\\
\textit{Inductive Hypothesis:} For any $k\in\mathbb{N}$, there exists $i\in\{0,\dots,2^k-1\}$ such that $P$ is eventually contained within the set $\{n\in\mathbb{N}:n\equiv i\mod2^k\}$. Hence, $\bar{d}(P)\leq2^{-k}$.
\\
\\
\textit{Base Case:} For $i\in\{0,1\}$, let $A_i=\{n\in\mathbb{N}:n\equiv i\mod2\}$. The sets $A_0$ and $A_1$ partition the vertices of $\vec{K}_{\mathbb{N}}$. Suppose $P$ contains a vertex $u\in A_1$. Then all of the vertices occurring after $u$ in $P$ must also be in $A_1$ because $P$ is a red directed path and hence $P$ is eventually contained within $A_1$. If $P$ does not contain a vertex from $A_1$, then $P$ must be completely contained within $A_0$. Hence, for some $i\in\{0,1\}$, we have $\bar{d}(P)\leq\bar{d}(A_i)=\frac12$.
\\
\\
\textit{Inductive Step:} Fix $k\geq2$ and partition the vertices of $\vec{K}_{\mathbb{N}}$ into $2^k$ sets $A_0,\dots,A_{2^k-1}$ based on their residue modulo $2^k$ (see Figure \ref{fig:densityzero}). By the inductive hypothesis, there exists $i\in\{0,\dots,2^{k-1}-1\}$ such that $P$ is eventually contained within the set $A_i\cup A_{2^{k-1}+i}$. By using the same argument as in the base case, we find that, if $P$ contains a vertex in $A_{2^{k-1}+i}$, then it is eventually contained within this set; otherwise, it is eventually contained within $A_i$. Hence, there exists $j\in\{0,\dots,2^k-1\}$ such that $P$ is eventually contained within $A_j$ and so $\bar{d}(P)\leq\bar{d}(A_j)=2^{-k}$.

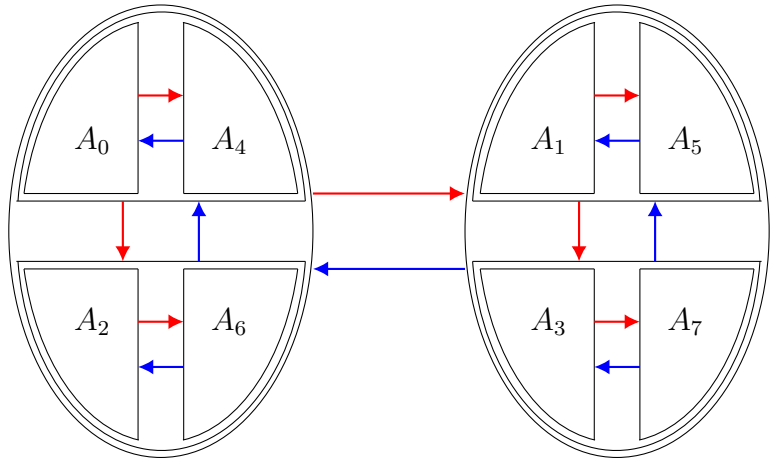
\begin{figure}[H]
\centering
\begin{tikzpicture}
\draw (-3,0) ellipse (2cm and 3cm) node {};
\draw (3,0) ellipse (2cm and 3cm) node {};
\draw[-{Latex[length=2mm,width=2mm]},blue,thick] (1,-0.5) to (-1,-0.5);
\draw[-{Latex[length=2mm,width=2mm]},red,thick] (-1,0.5) to (1,0.5);
\draw (-1.1,0.4) arc (6:174:1.9cm and 2.8cm);
\draw (-1.1,0.4) -- (-4.9,0.4);
\draw (-1.1,-0.4) arc (6:174:1.9cm and -2.8cm);
\draw (-1.1,-0.4) -- (-4.9,-0.4);
\draw[-{Latex[length=2mm,width=2mm]},blue,thick] (-2.5,-0.4) to (-2.5,0.4);
\draw[-{Latex[length=2mm,width=2mm]},red,thick] (-3.5,0.4) to (-3.5,-0.4);
\draw (1.1,0.4) arc (6:174:-1.9cm and 2.8cm);
\draw (1.1,0.4) -- (4.9,0.4);
\draw (1.1,-0.4) arc (6:174:-1.9cm and -2.8cm);
\draw (1.1,-0.4) -- (4.9,-0.4);
\draw[-{Latex[length=2mm,width=2mm]},blue,thick] (3.5,-0.4) to (3.5,0.4);
\draw[-{Latex[length=2mm,width=2mm]},red,thick] (2.5,0.4) to (2.5,-0.4);
\draw (1.2,0.5) arc (12:82:-1.8cm and 2.9cm);
\draw (1.2,0.5) -- (2.7,0.5);
\draw (2.7,2.77) -- (2.7,0.5);
\draw (4.8,0.5) arc (12:82:1.8cm and 2.9cm);
\draw (4.8,0.5) -- (3.3,0.5);
\draw (3.3,2.77) -- (3.3,0.5);
\draw[-{Latex[length=2mm,width=2mm]},blue,thick] (3.3,1.2) to (2.7,1.2);
\draw[-{Latex[length=2mm,width=2mm]},red,thick] (2.7,1.8) to (3.3,1.8);
\draw (-1.2,0.5) arc (12:82:1.8cm and 2.9cm);
\draw (-1.2,0.5) -- (-2.7,0.5);
\draw (-2.7,2.77) -- (-2.7,0.5);
\draw (-4.8,0.5) arc (12:82:-1.8cm and 2.9cm);
\draw (-4.8,0.5) -- (-3.3,0.5);
\draw (-3.3,2.77) -- (-3.3,0.5);
\draw[-{Latex[length=2mm,width=2mm]},blue,thick] (-2.7,1.2) to (-3.3,1.2);
\draw[-{Latex[length=2mm,width=2mm]},red,thick] (-3.3,1.8) to (-2.7,1.8);
\draw (1.2,-0.5) arc (12:82:-1.8cm and -2.9cm);
\draw (1.2,-0.5) -- (2.7,-0.5);
\draw (2.7,-2.77) -- (2.7,-0.5);
\draw (4.8,-0.5) arc (12:82:1.8cm and -2.9cm);
\draw (4.8,-0.5) -- (3.3,-0.5);
\draw (3.3,-2.77) -- (3.3,-0.5);
\draw[-{Latex[length=2mm,width=2mm]},blue,thick] (3.3,-1.8) to (2.7,-1.8);
\draw[-{Latex[length=2mm,width=2mm]},red,thick] (2.7,-1.2) to (3.3,-1.2);
\draw (-1.2,-0.5) arc (12:82:1.8cm and -2.9cm);
\draw (-1.2,-0.5) -- (-2.7,-0.5);
\draw (-2.7,-2.77) -- (-2.7,-0.5);
\draw (-4.8,-0.5) arc (12:82:-1.8cm and -2.9cm);
\draw (-4.8,-0.5) -- (-3.3,-0.5);
\draw (-3.3,-2.77) -- (-3.3,-0.5);
\draw[-{Latex[length=2mm,width=2mm]},blue,thick] (-2.7,-1.8) to (-3.3,-1.8);
\draw[-{Latex[length=2mm,width=2mm]},red,thick] (-3.3,-1.2) to (-2.7,-1.2);
\node at (-3.9,1.2) {$A_0$};
\node at (2.1,1.2) {$A_1$};
\node at (-3.9,-1.2) {$A_2$};
\node at (2.1,-1.2) {$A_3$};
\node at (-2.1,1.2) {$A_4$};
\node at (3.9,1.2) {$A_5$};
\node at (-2.1,-1.2) {$A_6$};
\node at (3.9,-1.2) {$A_7$};
\end{tikzpicture}
\caption{Diagram showing the edges between sets for $k=3$.}
\label{fig:densityzero}
\end{figure}
\noindent
The inductive hypothesis holds for every $k\in\mathbb{N}$. Therefore, if $P$ is any monochromatic directed path, we have that the upper density of $P$ is at most $2^{-k}$ for every $k\in\mathbb{N}$. Hence, $P$ has upper density 0.
\end{proof}

\section{Restricting Red Path Length}
\label{restrictred}
\begin{proof}[Proof of Theorem \ref{thm:restrictedred}]
Fix a 2-colouring of $\vec{K}_{\mathbb{N}}$ and assume that there is no red directed path of length $r$. Partition the vertices into sets $A_0,\dots,A_{r-1}$ where $n\in A_i$ if and only if the longest directed red path from $n$ has length $i$.
\\
\\
\textit{Claim 1: Let $v\in A_i$. Then the red indegree of $v$ in $A_i$ is at most $i$.}
\\
\noindent
Take a directed red path of maximal length $vv_{i-1}\dots v_0$. Let $U=\{u\in A_i: (u,v) \text{ is red}\}$. For every $u\in U$, there must exist $t\in\{0,\dots,i-1\}$ such that $u=v_t$ or else $uvv_{i-1}\dots v_0$ would be a directed red path of length $i+1$ from $u$, which contradicts $u\in A_i$.  Hence $|U|\leq i$.
\\
\\
At least one of the sets $A_i$ must have upper density at least $\frac1r$. Choose such an $A_i$ and let $B=\{v\in A_i:v\text{ has finite blue outdegree in }A_i\}$.
\\
\\
\textit{Claim 2: $|B|\leq i$.}
\\
\noindent
Suppose not and consider a subset $\{b_1,\dots,b_{i+1}\}\subseteq B$ of size $i+1$. We may list the vertices of $A_i$ in order $v_1,v_2,\dots$. For each $1\leq j\leq i+1$, define $t_j=\min\{k\in\mathbb{N}:\forall k'\geq k\text{ the edge }(b_j,v_{k'})\text{ is red}\}$. This is well-defined because $b_j$ has finite blue outdegree. Taking $t=\max\{t_1,\dots,t_{i+1}\}$, we find that the vertex $v_t$ has red indegree at least $i+1$ in $A_i$, which contradicts the result of Claim 1.
\\
\\
As $A_i$ is infinite and $B$ is finite, $A_i\backslash B$ is also infinite and $\bar{d}(A_i)=\bar{d}(A_i\backslash B)$. We may therefore assume, without loss of generality, that $B=\emptyset$. Every vertex in $A_i$ has infinite blue outdegree in $A_i$. Listing the vertices of $A_i$ in order $v_1,v_2,\dots$, we create a blue directed path $P=\{p_j\}$ as follows:
\begin{itemize}
\item Set $p_1=v_1$.
\item Given $p_j$, let $t=\min\{k:v_k\notin P\}$. If the edge $(p_j,v_t)$ is blue, set $p_{j+1}=v_t$. If not, there exists $u\notin P$ such that the edges $(p_j,u)$ and $(u,v_t)$ are both blue. Such a vertex $u$ always exists because $p_j$ has an infinite number of blue out-neighbours and $v_t$ has at most $i$ red in-neighbours. Set $p_{j+1}=u$ and $p_{j+2}=v_t$.
\end{itemize}
The blue directed path $P$ contains every vertex of $A_i$ and hence has upper density $\bar{d}(A_i)$.
\end{proof}

\section{Stability Result}
\label{stability}
\begin{proof}[Proof of Theorem \ref{thm:stability}]
Fix a 2-colouring of $\vec{K}_{\mathbb{N}}$ such that there is no red directed path of length $r$ and every blue directed path has density at most $\frac1r$. Partition the vertices into sets $A_0,\dots,A_{r-1}$ where $v\in A_i$ if and only if the longest directed red path from $v$ has length $i$.

By using the same argument as in the proof of Theorem \ref{thm:restrictedred}, we also know that, if $A_i$ is an infinite set, then there exists a blue directed path within $A_i$ of density $\bar{d}(A_i)$. As there are $r$ sets $A_i$, we must have $\bar{d}(A_i)=\frac1r$ for every $i\in\{0,\dots,r-1\}$.

We will prove by induction that, by removing a finite number of vertices, all edges from $A_{i-1}$ to $A_i$ are blue and all edges from $A_i$ to $A_{i-1}$ are red for every $i\in\{1,\dots,r-1\}$ (see Figure \ref{fig:induction}).

\begin{figure}[H]
\centering
\begin{tikzpicture}
\draw (-4.5,0) ellipse (1cm and 1.5cm) node {$A_0$};
\draw (-1.5,0) ellipse (1cm and 1.5cm) node {$A_1$};
\draw (1.5,0) ellipse (1cm and 1.5cm) node {$A_2$};
\draw (4.5,0) ellipse (1cm and 1.5cm) node {$A_3$};
\draw[-{Latex[length=2mm,width=2mm]},red,thick] (4,0.5) to (2,0.5);
\draw[-{Latex[length=2mm,width=2mm]},red,thick] (1,0.5) to (-1,0.5);
\draw[-{Latex[length=2mm,width=2mm]},red,thick] (-2,0.5) to (-4,0.5);
\draw[-{Latex[length=2mm,width=2mm]},blue,thick] (-4,-0.5) to (-2,-0.5);
\draw[-{Latex[length=2mm,width=2mm]},blue,thick] (-1,-0.5) to (1,-0.5);
\draw[-{Latex[length=2mm,width=2mm]},blue,thick] (2,-0.5) to (4,-0.5);
\end{tikzpicture}
\caption{Result of the induction for $r=4$.}
\label{fig:induction}
\end{figure}
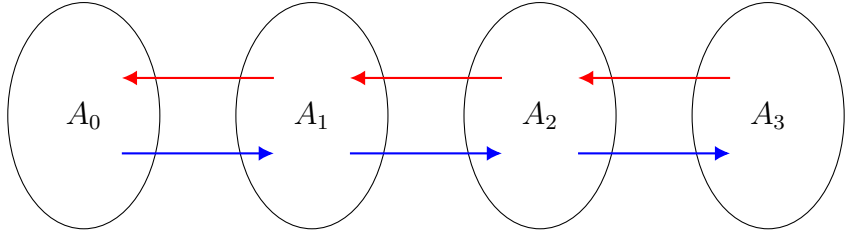

\noindent
\textit{Induction Hypothesis:} For fixed $k\geq1$, by removing a finite number of vertices, $A_0\cup\dots\cup A_{k}$ has the following structure: for every $i\in\{1,\dots,k\}$, all edges from  $A_{i-1}$ to $A_i$ are blue and all edges from $A_i$ to $A_{i-1}$ are red.
\\
\\
\textit{Base Case:} Let $k=1$. By definition of $A_0$, all edges from $A_0$ to $A_1$ are blue. If there are no blue edges from $A_1$ to $A_0$, we are done. Otherwise, we form a matching $M$ of blue edges from $A_1$ to $A_0$ by adding blue edges one by one until we cannot add any more.

Suppose $M$ is a finite matching. Let $B$ be the set of vertices which are incident to an edge in $M$. By definition, every blue edge from $A_1$ to $A_0$ has at least one endpoint in $B$. Therefore, by removing the vertices in $B$, we will ensure that every edge from $A_1$ to $A_0$ is red as required.

Suppose instead that $M$ is an infinite matching. We noted earlier that $A_0$ contains a blue directed path $P=\{p_j\}$ and $A_1$ contains a blue directed path $Q=\{q_k\}$, both of which have density $\frac1r$. We create a blue directed path $S$ as follows:
\begin{itemize}
\item Pick an edge $(q_k,p_j)\in M$ where $j>1$. Set $S=p_1\dots p_{j-1}q_1\dots q_kp_j$. The edge $(p_{j-1},q_1)$ is blue because all edges from $A_0$ to $A_1$ are blue.
\item Suppose the final two vertices of $S$ are $q_{k_1}p_{j_1}$ where $(q_{k_1},p_{j_1})\in M$. Pick an edge $(q_{k_2},p_{j_2})\in M$ such that $k_1<k_2$ and $j_1<j_2$. Such an edge exists because $M$ is an infinite matching and $S$ only contains a finite number of vertices. We note that the edge $(p_{j_2-1},q_{k_1+1})$ is blue because it is from $A_0$ to $A_1$. Add the subpath $p_{j_1+1}\dots p_{j_2-1}q_{k_1+1}\dots q_{k_2}p_{j_2}$ to the end of $S$ .
\end{itemize}
\noindent
By construction, the blue directed path $S$ uses every vertex in both $P$ and $Q$ and so $\bar{d}(S)=\bar{d}(P)+\bar{d}(Q)=\frac2r$.

We assumed that every blue path had density at most $\frac1r$ so this is a contradiction. Therefore, any blue matching from $A_1$ to $A_0$ is finite and so we are done.
\\
\\
\textit{Inductive Step:} Now let $k>1$. By our inductive hypothesis, the claim is true for $k-1$ so, by removing a finite number of vertices, we may assume that all edges from $A_{i-1}$ to $A_i$ are blue and all edges from $A_i$ to $A_{i-1}$ are red for every $i\in\{1,\dots,k-1\}$.

Consider $A_{k-1}\cup A_k$ and suppose there is a red edge from $x\in A_{k-1}$ to $y\in A_k$. All edges from $y$ to $A_{k-1}\backslash\{x\}$ are blue or else we would have a red directed path of length $k+1$ from $x$ which contradicts $x\in A_{k-1}$.  

In the original graph, there is a red directed path $P$ of length $k$ from $y$ because $y\in A_k$. Let $P=p_0p_1\dots p_k$ with $p_0=y$. If, for every $t\in[k]$, we have that $p_t\neq x$, then $xyp_1\dots p_k$ is a red directed path of length $k+1$ from $x$, which contradicts $x\in A_{k-1}$. Therefore, $x=p_t$ for some $t\in[k]$. No edges are removed when we apply the inductive hypothesis and so there are at least $k$ vertex-disjoint paths of length $k-1$ from $x$. Therefore, there exists a red directed path $Q$ of length $k-1$ from $x$ which is vertex-disjoint from $yp_1\dots p_{t-1}$. The path $yp_1\dots p_{t-1}\cup Q$ is a red directed path from $y$ of length $t+k-1$. If $t\neq1$, this path has length strictly larger than $k$, which contradicts $y\in A_k$. Therefore we must have that $t=1$ and the edge $(y,x)$ must be red. 

Consider any vertex $u\in A_{k-1}$ with $u\neq x$. We can find a red directed path of length $k-1$ from $x$ that does not contain $u$ or $y$. To prevent a red directed path of length $k+1$ from $u$, the edge $(u,y)$ must be blue. Therefore, all edges from $A_{k-1}\backslash\{x\}$ to $y$ must be blue (see Figure \ref{fig:doublerededge}).

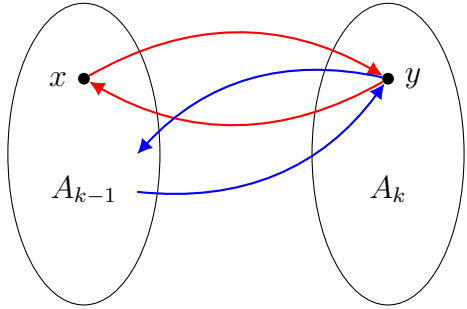
\begin{figure}[H]
\centering
\begin{tikzpicture}
\draw (-2,0) ellipse (1cm and 2cm) node [label={below:$A_{k-1}$}] {};
\draw (2,0) ellipse (1cm and 2cm) node [label={below:$A_k$}] {};
\node[main node, label={180:$x$}] (x) at (-2,1) {};
\node[main node, label={0:$y$}] (y) at (2,1) {};
\draw[-{Latex[length=2mm,width=2mm]},red,thick] (x) to [bend left] (y);
\draw[-{Latex[length=2mm,width=2mm]},red,thick] (y) to [bend left] (x);
\draw[-{Latex[length=2mm,width=2mm]},blue,thick] (y) to [bend right] (-1.3,0);
\draw[-{Latex[length=2mm,width=2mm]},blue,thick] (-1.3,-0.5) to [bend right] (y);
\end{tikzpicture}
\caption{The colours of edges incident to $y$.}
\label{fig:doublerededge}
\end{figure}

Hence, if there is a red edge $xy$ from $A_{k-1}$ to $A_k$, the corresponding backwards edge is also red and all edges between $A_{k-1}\backslash\{x\}$ and $y$ in both directions are blue. Suppose there are an infinite number of red vertex-disjoint edges from $A_{k-1}$ to $A_k$. As both $A_{k-1}$ and $A_k$ contain a blue directed path of density $\frac1r$, we can use these and the pairs of vertices corresponding to the red edges to create a blue directed path of density $\frac2r$. This contradicts our assumption that all blue directed paths have density at most $\frac1r$. Therefore, there are only a finite number of red vertex-disjoint edges from $A_{k-1}$ to $A_k$. By removing a finite number of vertices, we can ensure that all edges from $A_{k-1}$ to $A_k$ are blue.

In order to show that we can remove a finite number of vertices to get all edges from $A_k$ to $A_{k-1}$ to be red, we use exactly the same argument as in the base case.
\\
\\
\noindent
It follows from the inductive argument above that we can remove a finite number of vertices in order to obtain the following structure: for every $i\in\{1,\dots,r-1\}$, all edges from  $A_{i-1}$ to $A_i$ are blue and all edges from $A_i$ to $A_{i-1}$ are red.

We now consider the edges between $A_i$ and $A_j$ where $i<j$ and $i\neq j-1$. All edges from $A_i$ to $A_j$ are blue. If this were not the case, we could find a red directed path of length $j+1$ from a vertex in $A_i$, which is a contradiction. Next, we will show that we can remove a finite number of vertices to get all edges from $A_j$ to $A_i$ being red. If there are no blue edges from $A_j$ to $A_i$, we are done. Otherwise, we may greedily construct a maximal matching $M$ of blue edges from $A_j$ to $A_i$. Using the same argument as the induction base case, we get that $M$ must be a finite matching; if this was not the case, then we would be able to find a blue directed path of density $\frac2r$, which contradicts our original assumption. By deleting the vertices which are incident to an edge in $M$, we will remove all of the blue edges from $A_j$ to $A_i$. We only deleted a finite number of vertices and now all of the edges from $A_j$ to $A_i$ are red as required.

There are only a finite number of pairs $(A_i,A_j)$ to consider. Therefore, by removing a finite number of vertices, we can ensure that, for every $i<j$, all edges from $A_i$ to $A_j$ are blue and all edges from $A_j$ to $A_i$ are red.

Finally, we look at the edges between vertices in the same set. Suppose, for some $i\in\{0,\dots,r-1\}$ there exist $x,y\in A_i$ such that the edge from $x$ to $y$ is red. Then we can find a red directed path of length $i+1$ from $x$, which contradicts $x\in A_i$. Hence, all edges between vertices in the same set $A_i$ are blue.
\end{proof}

\section{Restricting Path Length for Many Colours}
\label{restrictedmanycolours}
\begin{proof}[Proof of Theorem \ref{thm:restrictedmanycolours}]
Let $k\geq2$. Fix a $(k+1)$-colouring of $\vec{K}_{\mathbb{N}}$ where the colours are $c_1,\dots,c_{k+1}$. Assume that, for every $t\in[k]$, there is no $c_t$-coloured directed path of length $r_t$.
\\
\\
\textit{Inductive Hypothesis:} For fixed $i\in[k]$, there exists a set $A\subseteq\mathbb{N}$ such that, for every $t\in[i]$, the $c_t$-indegree of vertices in $A$ is at most $r_t$. Furthermore, $\bar{d}(A)\geq\prod_{t=1}^i\frac1{r_t}$.
\\
\\
\textit{Base Case:} Partition the vertices into sets $A_0,\dots,A_{r_1-1}$ where $n\in A_j$ if and only if the longest directed path of colour $c_1$ from $n$ has length $j$. Fix $j$ such that $\bar{d}(A_j)\geq\frac1{r_1}$. By Claim 1 in the proof of Theorem \ref{thm:restrictedred}, we find that the $c_1$-indegree of vertices in $A_j$ is at most $j\leq r_1$.
\\
\\
\textit{Inductive Step:} Let $i>1$. By our inductive hypothesis, there exists a set $A\subseteq\mathbb{N}$ with $\bar{d}(A)\geq\prod_{t=1}^{i-1}\frac1{r_t}$ such that, for every $t\in[i-1]$, the $c_t$-indegree of vertices in $A$ is at most $r_t$. Partition $A$ into $r_i$ sets $A_0,\dots,A_{r_i-1}$ where $n\in A_j$ if and only if the longest directed path of colour $c_i$ from $n$ in $A$ has length $j$. Fix $j$ such that $\bar{d}(A_j)\geq\frac1{r_i}\bar{d}(A)$. Arguing again as in Claim 1 in the proof of Theorem \ref{thm:restrictedred}, we find that the $c_i$-indegree of vertices in $A_j$ is at most $j\leq r_i$.
\\
\\
Let $A$ be the set obtained by applying the inductive hypothesis with $i=k$. Let $B=\{v\in A:v\text{ has finite }c_{k+1}\text{-outdegree in }A\}$. Using a similar argument to Claim 2 in the proof of Theorem \ref{thm:restrictedred}, we find that $|B|\leq\sum_{t=1}^kr_t$.
	
As in the proof of Theorem \ref{thm:restrictedred}, we may assume, without loss of generality, that $B=\emptyset$. We list the vertices of $A$ in order $v_1,v_2,\dots$ and create a directed path $P=\{p_j\}$ of colour $c_{k+1}$ as follows:
\begin{itemize}
\item Set $p_1=v_1$.
\item Given $p_j$, let $s=\min\{s':v_{s'}\notin P\}$. If the edge $(p_j,v_s)$ is colour $c_{k+1}$, set $p_{j+1}=v_s$. If not, there exists $u\notin P$ such that the edges $(p_j,u)$ and $(u,v_s)$ are both colour $c_{k+1}$ because $p_j$ has an infinite number of out-neighbours with colour $c_{k+1}$ and $v_s$ has at most $\sum_{t=1}^kr_t$ in-neighbours which are not colour $c_{k+1}$. Set $p_{j+1}=u$ and $p_{j+2}=v_s$.
\end{itemize}
The directed path $P$ of colour $c_{k+1}$ contains every vertex of $A$ and hence has upper density $\bar{d}(A)$.
\end{proof}
\noindent
This result is the best possible as shown by the following construction. Partition the vertices of $\vec{K}_{\mathbb{N}}$ into $\prod_{t=1}^kr_t$ sets based on their residue modulo $r_t$ for each $t\in[k]$. The vertex $n\in\mathbb{N}$ is in the set $A_{i_1,i_2,\dots,i_k}$ if and only if, for each $t\in[k]$, $n\equiv i_t\mod r_t$.

Consider vertices $m,n\in\mathbb{N}$. If $m,n\in A_{i_1,\dots i_k}$, then colour both $(m,n)$ and $(n,m)$ colour $c_{k+1}$. If not, suppose that $m\in A_{i_1,\dots,i_k}$ and $n\in A_{j_1,\dots,j_k}$. Let $t=\min\{t'\in[k]:i_{t'}\neq j_{t'}\}$. If $i_t<j_t$, then colour $(m,n)$ colour $c_{k+1}$ and $(n,m)$ colour $c_t$ (see Figure \ref{fig:extremalredgreen}).

\begin{figure}[H]
\centering
\begin{tikzpicture}
\draw (-4,2) ellipse (1.5cm and 1cm) node {$A_{0,0}$};
\draw (0,2) ellipse (1.5cm and 1cm) node {$A_{0,1}$};
\draw (4,2) ellipse (1.5cm and 1cm) node {$A_{0,2}$};
\draw (-4,-2) ellipse (1.5cm and 1cm) node {$A_{1,0}$};
\draw (0,-2) ellipse (1.5cm and 1cm) node {$A_{1,1}$};
\draw (4,-2) ellipse (1.5cm and 1cm) node {$A_{1,2}$};

\draw[-{Latex[length=2mm,width=2mm]},green,thick] (-1,2.2) to (-3,2.2);
\draw[-{Latex[length=2mm,width=2mm]},green,thick] (3,2.2) to (1,2.2);
\draw[-{Latex[length=2mm,width=2mm]},blue,thick] (-3,1.8) to (-1,1.8);
\draw[-{Latex[length=2mm,width=2mm]},blue,thick] (1,1.8) to (3,1.8);
\draw[-{Latex[length=2mm,width=2mm]},green,thick] (3.3,2.6) to [bend right] (-3.3,2.6);
\draw[-{Latex[length=2mm,width=2mm]},blue,thick] (-3,2.4) to [bend left] (3,2.4);

\draw[-{Latex[length=2mm,width=2mm]},green,thick] (-1,-1.8) to (-3,-1.8);
\draw[-{Latex[length=2mm,width=2mm]},green,thick] (3,-1.8) to (1,-1.8);
\draw[-{Latex[length=2mm,width=2mm]},blue,thick] (-3,-2.2) to (-1,-2.2);
\draw[-{Latex[length=2mm,width=2mm]},blue,thick] (1,-2.2) to (3,-2.2);
\draw[-{Latex[length=2mm,width=2mm]},green,thick] (3.3,-2.6) to [bend left] (-3.3,-2.6);
\draw[-{Latex[length=2mm,width=2mm]},blue,thick] (-3,-2.4) to [bend right] (3,-2.4);

\draw[-{Latex[length=2mm,width=2mm]},red,thick] (-3.8,-1.5) to (-3.8,1.5);
\draw[-{Latex[length=2mm,width=2mm]},blue,thick] (-4.2,1.5) to (-4.2,-1.5);
\draw[-{Latex[length=2mm,width=2mm]},red,thick] (0.2,-1.5) to (0.2,1.5);
\draw[-{Latex[length=2mm,width=2mm]},blue,thick] (-0.2,1.5) to (-0.2,-1.5);
\draw[-{Latex[length=2mm,width=2mm]},red,thick] (4.2,-1.5) to (4.2,1.5);
\draw[-{Latex[length=2mm,width=2mm]},blue,thick] (3.8,1.5) to (3.8,-1.5);

\draw[-{Latex[length=2mm,width=2mm]},red,thick] (3.5,-1.2) to (0.9,1.4);
\draw[-{Latex[length=2mm,width=2mm]},blue,thick] (0.7,1.2) to (3.3,-1.4);
\draw[-{Latex[length=2mm,width=2mm]},red,thick] (-3.5,-1.2) to (-0.9,1.4);
\draw[-{Latex[length=2mm,width=2mm]},blue,thick] (-0.7,1.2) to (-3.3,-1.4);
\draw[-{Latex[length=2mm,width=2mm]},red,thick] (0.7,-1.2) to (3.3,1.4);
\draw[-{Latex[length=2mm,width=2mm]},blue,thick] (3.5,1.2) to (0.9,-1.4);
\draw[-{Latex[length=2mm,width=2mm]},red,thick] (-0.7,-1.2) to (-3.3,1.4);
\draw[-{Latex[length=2mm,width=2mm]},blue,thick] (-3.5,1.2) to (-0.9,-1.4);

\draw[-{Latex[length=2mm,width=2mm]},red,thick] (3,-1.4) to (-3,1.6);
\draw[-{Latex[length=2mm,width=2mm]},blue,thick] (-3.1,1.4) to (2.9,-1.6);
\draw[-{Latex[length=2mm,width=2mm]},red,thick] (-3,-1.4) to (3,1.6);
\draw[-{Latex[length=2mm,width=2mm]},blue,thick] (3.1,1.4) to (-2.9,-1.6);

\draw[-{Latex[length=2mm,width=2mm]},blue,thick] (-4.6,1.7) to (-4.6,2.3);
\draw[-{Latex[length=2mm,width=2mm]},blue,thick] (-4.9,2.3) to (-4.9,1.7);
\draw[-{Latex[length=2mm,width=2mm]},blue,thick] (-4.6,-1.7) to (-4.6,-2.3);
\draw[-{Latex[length=2mm,width=2mm]},blue,thick] (-4.9,-2.3) to (-4.9,-1.7);

\draw[-{Latex[length=2mm,width=2mm]},blue,thick] (4.6,1.7) to (4.6,2.3);
\draw[-{Latex[length=2mm,width=2mm]},blue,thick] (4.9,2.3) to (4.9,1.7);
\draw[-{Latex[length=2mm,width=2mm]},blue,thick] (4.6,-1.7) to (4.6,-2.3);
\draw[-{Latex[length=2mm,width=2mm]},blue,thick] (4.9,-2.3) to (4.9,-1.7);

\draw[-{Latex[length=2mm,width=2mm]},blue,thick] (-0.6,1.7) to (-0.6,2.3);
\draw[-{Latex[length=2mm,width=2mm]},blue,thick] (0.6,2.3) to (0.6,1.7);
\draw[-{Latex[length=2mm,width=2mm]},blue,thick] (-0.6,-1.7) to (-0.6,-2.3);
\draw[-{Latex[length=2mm,width=2mm]},blue,thick] (0.6,-2.3) to (0.6,-1.7);
\end{tikzpicture}
\caption{Extremal colouring when $r_1=2$ and $r_2=3$.}
\label{fig:extremalredgreen}
\end{figure}
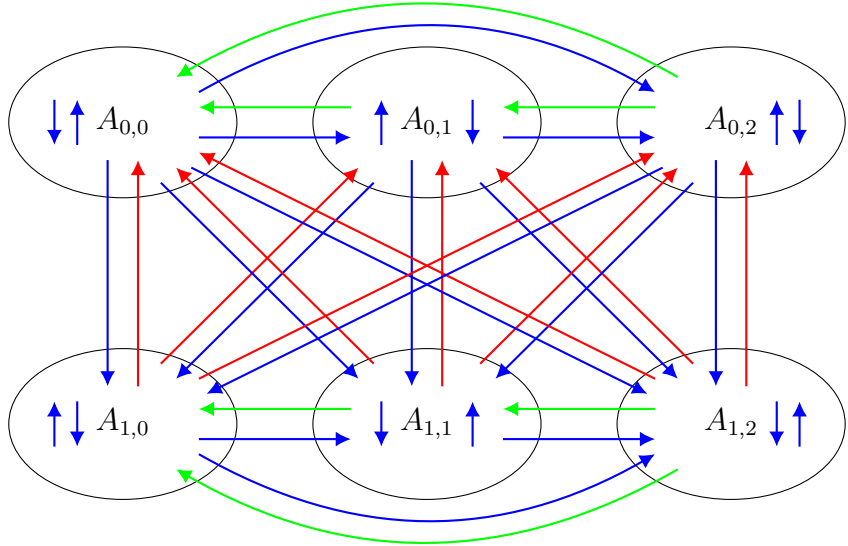
\noindent
It is straightforward to see that, for every $i\in[k]$, the longest directed path of colour $c_t$ has length $r_t-1$. Any directed path of colour $c_{k+1}$ has infinite intersection with at most one of the sets $A_{i_1,\dots,i_k}$ and hence has upper density at most $\prod_{t=1}^k\frac1{r_t}$.

\section{Conclusion}
Consider a 2-colouring of $\vec{K}_{\mathbb{N}}$ that does not contain a red directed path of length $r$ and partition the vertices into $r$ sets, $A_1,\dots,A_{r-1}$, where $v\in A_i$ if and only if the longest directed red path from $v$ has length $i$. It follows from the proof of Theorem \ref{thm:restrictedred} that, if $A_i$ is infinite, there is a blue directed path covering all but a finite number of vertices in $A_i$. Hence, there exist at most $r$ disjoint blue paths covering all but a finite number of vertices of $\vec{K}_{\mathbb{N}}$.

In fact, if $r<4$, it is possible to prove that we may cover all of the vertices of $\vec{K}_{\mathbb{N}}$ with at most $r$ vertex-disjoint blue paths. We therefore make the following conjecture.

\begin{conjecture}
Let $r\geq1$. Take any 2-colouring of $\vec{K}_{\mathbb{N}}$ that does not contain a red directed path of length $r$. Then the vertices of $\vec{K}_{\mathbb{N}}$ can be covered by at most $r$ vertex-disjoint blue directed paths.
\end{conjecture}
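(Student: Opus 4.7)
The plan is to proceed by induction on $r$; the base case $r=1$ is immediate, since the hypothesis forces every edge to be blue and the enumeration $1\to 2\to 3\to\dots$ is then a blue Hamilton path. For the inductive step, I would fix a $2$-colouring of $\vec K_{\mathbb N}$ with no red directed path of length $r$ and partition $\mathbb N$ into $A_0,\dots,A_{r-1}$ as in the proof of Theorem~\ref{thm:restrictedred}, where $v\in A_i$ iff the longest red directed path starting at $v$ has length $i$. Any red directed path of length $r-1$ must start at a vertex of level at least $r-1$, so the induced colouring on $A_0\cup\dots\cup A_{r-2}$ contains no red directed path of length $r-1$ and, by the inductive hypothesis, admits a covering by at most $r-1$ vertex-disjoint blue directed paths. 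The task then reduces to covering $A_{r-1}$ by a single further blue directed path.

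To do this, I would first run the greedy construction from the proof of Theorem~\ref{thm:restrictedred} inside $A_{r-1}$, obtaining a blue directed path $P$ covering $A_{r-1}\setminus B$, where $B=\{v\in A_{r-1}:v\text{ has finite blue outdegree in }A_{r-1}\}$ has $|B|\le r-1$ by Claim~2. The key structural input for absorbing each $b\in B$ into $P$ is a strengthening of Claim~1: every $v\in A_{r-1}$ has at most $r-1$ red in-neighbours in the \emph{whole} digraph, because a red edge $(x,v)$ with $x$ not on some longest red path from $v$ would prepend to give a simple red path of length $r$, contradicting the hypothesis. For $b\in B$ with non-empty blue out-neighbourhood $N\subseteq A_{r-1}$, absorbing $b$ into $P$ amounts to finding $y\in N$ whose predecessor $x$ in $P$ satisfies $(x,b)$ blue; at most $r-1$ predecessors are bad, so a pigeonhole argument succeeds when $|N|\ge r$, and when $|N|$ is smaller one instead rebuilds the prefix of $P$ to start with $x\to b\to y$ for any $y\in N$ and $x\in A_{r-1}\setminus(N^-_R(b)\cup\{b,y\})$, which exists since $A_{r-1}$ is infinite. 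When $|B|>1$ this scheme is iterated by chaining the prefix across the distinct $b$'s.

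The principal obstacle---consistent with the paper's remark that the conjecture is currently known only for $r<4$---is the extremal case in which some $b\in B$ has $N=\emptyset$, i.e.\ no blue out-edges within $A_{r-1}$ at all. The structural fact above then forces $b$ to lie on \emph{every} longest red path from every other vertex of $A_{r-1}$: deleting $b$ strictly reduces the maximum red path length at each such vertex, so the $2$-colouring induced on $V\setminus\{b\}$ contains no red directed path of length $r-1$. The inductive hypothesis applied to $V\setminus\{b\}$ with parameter $r-1$ then yields $r-1$ vertex-disjoint blue directed paths covering $V\setminus\{b\}$, and the singleton $\{b\}$ serves as the $r$th path. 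The technical difficulty I anticipate in completing the argument for arbitrary $r$ lies in making the interaction between the iterative absorption in the ``standard'' case and the singleton trick in the extremal case work consistently when several $b\in B$ have small or empty $N$ simultaneously, so that the total number of paths never exceeds $r$.
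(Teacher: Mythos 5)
This statement is an open \emph{conjecture} in the paper --- the author explicitly records that it is known only for $r<4$ and does not supply a proof --- so there is no reference argument to compare against. Your proposal is therefore an attempt at an open problem, and you are right to be candid that it is incomplete. Let me name the concrete gaps.

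First, the reduction itself is not quite clean. You apply the inductive hypothesis (the conjecture with parameter $r-1$) to the induced colouring on $A_0\cup\dots\cup A_{r-2}$. But the conjecture is stated for $\vec K_{\mathbb N}$, i.e.\ for infinite complete symmetric digraphs, and $A_0\cup\dots\cup A_{r-2}$ can be finite or even empty (for example, with $r=2$ one can colour so that every vertex has a unique red out-edge paired with a red back-edge, making $A_0=\emptyset$). So the induction either needs a finite analogue of the conjecture --- which is itself a non-trivial Gallai--Milgram-type statement that would have to be proved separately --- or a different structuring of the induction. Second, and more seriously, the absorption step for $B$ is not closed. The pigeonhole argument needs $|N(b)|$ bounded below not just by $r$ but by roughly $r+|B|$ (to avoid $y$'s that are themselves in $B$, or the start of $P$), and it degrades as $b$'s are absorbed and $P$ is modified; meanwhile the ``rebuild the prefix'' fallback for $1\le|N(b)|<r$ is only sketched: starting $P$ at $x\to b\to y$ requires recovering a blue path through all vertices that formerly preceded $y$, and nothing guarantees this is possible. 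When several $b$'s simultaneously have small $|N(b)|$, they may all compete for the single prefix slot. Finally, the singleton trick is sound as far as it goes (deleting a $b$ with no blue out-edges inside $A_{r-1}$ does force every red in-neighbour of any other $v\in A_{r-1}$ to be $b$ itself, hence drops the longest red path length below $r-1$), and it does dispose of the entire graph in one stroke when applicable; but it only triggers when some $b$ has \emph{zero} blue out-degree in $A_{r-1}$, so it does not rescue the small-but-nonzero case. In short: the framework is reasonable and the structural lemmas you extract (red in-degree at most $r-1$ globally for vertices of top level; the singleton deletion argument) are correct and potentially useful, but the absorption step is genuinely unfinished, which is consistent with the statement being open.
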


In this paper, we have restricted our attention to directed paths. Let $P=v_1v_2...$ be an orientation of a path in $\vec{K}_{\mathbb{N}}$. We say that $v_i$ is a \textit{switch} if either the indegree or the outdegree of $v_i$ in $P$ is 0. The endpoints of $P$ are always switches. If $P$ is a directed path, then the endpoints are the only switches in $P$. DeBiasio and McKenney \cite{DM16} proved that Theorem \ref{thm:densityepsilon} holds for any orientation of a path with finitely many switches. It should be noted that Theorem \ref{thm:density0} still holds for this broader class of paths.


\begin{thebibliography}{99}
\bibitem{DM16} L. DeBiasio and P. McKenney, Density of monochromatic infinite subgraphs, {\em http://arxiv.org/abs.1611.05423} (2016).
\bibitem{ESSS17} M. Elekes, D. Soukup, L. Soukup and Z. Szentmikl\`{o}ssy, Decompositions of edge-colored infinite complete graphs into monochromatic paths, {\em Discrete Mathematics} {\bf 340} (2017), 2053--2069.
\bibitem{EG93} P. Erd\H{o}s and F. Galvin, Monochromatic infinite paths, {\em Discrete Mathematics} {\bf 113} (1993), 59--70.
\bibitem{R78} R. Rado, Monochromatic paths in graphs, {\em Ann Discrete Math.} {\bf 3} (1978), 191--194.
\bibitem{R73} H. Raynaud, Sur le circuit hamiltonien bi-color\'{e} dans les graphes orient\'{e}s, {\em Periodica Mathematica Hungarica} {\bf 3} (1973), 289--297.
\end{thebibliography}
\end{document}